\numberwithin{equation}{section}
\newtheorem{theorem}{Theorem}
\numberwithin{theorem}{section}
\numberwithin{corollary}{section}
\numberwithin{lemma}{section}
\numberwithin{definition}{section}
\numberwithin{proposition}{section}
\numberwithin{remark}{section}
\newcommand{\dint}{\ds\int}
\newcommand{\ds}{\displaystyle}
\newcommand{\R}{\mathbb R}
\newcommand{\N}{\mathbb N}
\newcommand{\medint}{-\kern  -,375cm\int}
\thanks{
$^{1}$ Dipartimento di Matematica e Applicazioni ``R. Caccioppoli'', Universit\`{a}
degli Studi di Napoli ``Federico II'', Complesso Monte S. Angelo, via Cintia
- 80126 Napoli, Italy; email: brandolini@unina.it; fchiacch@unina.it; cristina@unina.it.
\\
\indent $^2$ Institut \'Elie Cartan Nancy, Nancy Universit\'e-CNRS-INRIA, B.P. 70239, 54506 Vandoeuvre les Nancy, France; 
{antoine.henrot@univ-lorraine.fr}}
\subjclass{35B45; 35P15; 35J70}
\keywords{Neumann eigenvalue, Hermite operator, sharp bounds}
\begin{document}
\title[An optimal Poincar\'e-Wirtinger inequality in Gauss space]{An optimal Poincar\'e-Wirtinger inequality in Gauss space}
\author[\textsf{B.~Brandolini}]{Barbara Brandolini$^{1}$}
\author[\textsf{F.~Chiacchio}]{Francesco Chiacchio$^{1}$}
\author[\textsf{A.~Henrot}]{Antoine Henrot$^{2}$}
\author[\textsf{C.~Trombetti}]{Cristina Trombetti$^{1}$}

\begin{abstract}
Let $\Omega$ be a smooth, convex, unbounded domain of $\R^N$. Denote by $\mu_1(\Omega)$  the first nontrivial Neumann eigenvalue  of the Hermite operator in $\Omega$; we prove that $\mu_1(\Omega) \ge 1$. The result is sharp since equality sign is achieved when $\Omega$ is a $N$-dimensional strip. Our estimate can be equivalently viewed as an optimal Poincar\'e-Wirtinger inequality for functions belonging to the weighted Sobolev space $H^1(\Omega,d\gamma_N)$, where  $\gamma_N$ is the $N$-dimensional Gaussian measure.
\end{abstract}

\maketitle

\section{Introduction}

Let $\Omega $ be a convex domain of $\mathbb{R}^{N}$ ($N\geq 2$) and let us
denote by $d\gamma _{N}$ the standard Gaussian measure in $\mathbb{R}^{N}$,
that is%
\begin{equation*}
d\gamma _{N}=\frac{1}{(2\pi )^{N/2}}e^{-\frac{\left\vert x\right\vert ^{2}}{2%
}}dx.
\end{equation*}%
In \cite{AN} (see also \cite{Ni} and \cite{AC}) the authors prove, among
other things, that, if $\Omega$ is bounded, the first nontrivial eigenvalue $\mu _{1}(\Omega )$ of
the following problem
\begin{equation}
\left\{
\begin{array}{ll}
-\mathrm{div}\left( \exp \left( -\frac{|x|^{2}}{2}\right) Du\right) =\mu
\exp \left( -\frac{|x|^{2}}{2}\right) u & \mbox{in}\>\Omega \\
&  \\
\dfrac{\partial u}{\partial \nu }=0 & \mbox{on}\>\partial \Omega %
\end{array}%
\right.  \label{problem}
\end{equation}%
satisfies
\begin{equation}
\mu _{1}(\Omega )\geq \max \left\{ 1,\frac{1}{2}+\frac{\pi ^{2}}{\mathrm{diam%
}(\Omega )^{2}}\right\}.  \label{an1}
\end{equation}%
Here $\nu$ stands for the outward normal to $\partial\Omega$ and $\mathrm{%
diam}(\Omega )$  for the diameter of $\Omega $. It is well-known that $\mu_1(\Omega)$ can be characterized in the following variational way
$$
\mu_1(\Omega)=\min\left\{\dfrac{\dint_\Omega |D\psi|^2d\gamma_N}{\dint_\Omega \psi^2d\gamma_N}: \psi \in H^1(\Omega,d\gamma_N)\setminus\{0\},\int_\Omega \psi d\gamma_N=0\right\},
$$
where $H^1(\Omega,d\gamma_N)$ is the weighted Sobolev space defined as follows
$$
H^{1}(\Omega ,d\gamma _{N})\equiv \left\{ u\in W_{\rm loc}^{1,1}(\Omega )\text{
such that }(u,|Du|)\in L^{2}(\Omega ,d\gamma _{N})\times L^{2}(\Omega
,d\gamma _{N})\right\} ,
$$
endowed with the norm
$$
\left\Vert u\right\Vert _{H^{1}(\Omega ,d\gamma _{N})}=\left\Vert
u\right\Vert _{L^{2}(\Omega ,d\gamma _{N})}+\left\Vert Du\right\Vert
_{L^{2}(\Omega ,d\gamma _{N})}.   
$$
Incidentally note that the space $H^1(\Omega,d\gamma_N)\equiv H^1(\Omega)$ whenever $\Omega$ is a bounded domain. 

\noindent {Estimate \eqref{an1}} implies that, if $\Omega$ is a bounded, convex domain of $\R^N$, the following Poincar\'e-Wirtinger inequality holds
\begin{equation}\label{poincare}
\int_\Omega \left(u-\int_\Omega ud\gamma_N\right)^2 d\gamma_N \le \int_\Omega |Du|^2 d\gamma_N,\qquad \forall u \in H^1(\Omega,d\gamma_N).
\end{equation}
It is well-known (see for example \cite{CH}) that, when $\Omega =\R^N$, inequality \eqref{poincare} still holds true.
The purpose of this paper is to fill the gap between convex, bounded  sets and the whole $\R^N$ by proving the following sharp lower bound.

\begin{theorem}
\label{main} Let $\Omega\subset \mathbb{R}^N$ be a convex, $C^2$ domain
whose boundary satisfies a uniform interior sphere condition (see %
\eqref{uisc} below). Then
\begin{equation}  \label{mu}
\mu_1(\Omega) \ge 1,
\end{equation}
equality holding if $\Omega$ is any $N$-dimensional strip.
\end{theorem}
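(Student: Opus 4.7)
\medskip
\noindent\emph{Proof plan.} The plan is to reduce the unbounded case to the known bounded convex case \eqref{an1} via a convex exhaustion. For each $R>0$, set $\Omega_R := \Omega \cap B_R$; since $\Omega$ and $B_R$ are both convex, $\Omega_R$ is a bounded convex set. I would first argue that the Poincar\'e--Wirtinger inequality \eqref{poincare}, which for bounded convex domains is equivalent to $\mu_1 \ge 1$ and follows from \eqref{an1}, remains valid on $\Omega_R$. Applied to an arbitrary $u \in H^1(\Omega,d\gamma_N)$ with $\int_\Omega u\, d\gamma_N = 0$, this gives
\[
\int_{\Omega_R} (u - m_R)^2\, d\gamma_N \le \int_{\Omega_R} |Du|^2\, d\gamma_N,
\qquad m_R := \frac{1}{\gamma_N(\Omega_R)} \int_{\Omega_R} u\, d\gamma_N.
\]

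Next I would pass to the limit $R\to\infty$. Because $d\gamma_N$ is a probability measure, $L^2(\Omega,d\gamma_N)\subset L^1(\Omega,d\gamma_N)$, and dominated convergence gives $\int_{\Omega_R} u\, d\gamma_N \to \int_\Omega u\, d\gamma_N = 0$ while $\gamma_N(\Omega_R)\to\gamma_N(\Omega)>0$, so $m_R\to 0$. Monotone convergence then yields $\int_{\Omega_R} u^2 d\gamma_N \to \int_\Omega u^2 d\gamma_N$ and $\int_{\Omega_R}|Du|^2 d\gamma_N \to \int_\Omega |Du|^2 d\gamma_N$, so the cross term $m_R^2\gamma_N(\Omega_R)$ vanishes and one obtains
\[
\int_\Omega u^2\, d\gamma_N \le \int_\Omega |Du|^2\, d\gamma_N,
\]
which, through the variational characterization of $\mu_1(\Omega)$ recalled above, gives \eqref{mu}.

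For the equality statement, I would test the Rayleigh quotient on a strip $\Omega = \R^{N-1}\times(a,b)$ with $u(x)=x_1$. The product structure of $d\gamma_N$ and the one-dimensional moments $\int_\R x\, d\gamma_1 = 0$, $\int_\R x^2\, d\gamma_1 = 1$ give $\int_\Omega u\, d\gamma_N = 0$ and $\int_\Omega u^2\, d\gamma_N = \int_\Omega |Du|^2\, d\gamma_N = \gamma_1((a,b))$, so the quotient equals $1$; combined with \eqref{mu} this forces $\mu_1(\Omega)=1$.

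The main obstacle is the very first step: $\Omega_R = \Omega \cap B_R$ generically carries a ridge along $\partial\Omega\cap\partial B_R$, so it fails the $C^2$ or uniform interior sphere hypotheses that may be built into \eqref{an1}. I would handle this either by checking that the argument of \cite{AN} extends to arbitrary bounded convex sets, or by approximating $\Omega_R$ from inside by an increasing sequence of smooth convex domains $\Omega_R^k$ (smoothing the ridge while keeping the $\partial\Omega$-part intact, which is feasible thanks to the uniform interior sphere condition on $\Omega$), applying \eqref{an1} on each $\Omega_R^k$, and sending $k\to\infty$ before letting $R\to\infty$. Everything else then reduces to routine convergence in the Rayleigh quotient.
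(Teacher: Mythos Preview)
Your approach is correct and considerably more direct than the paper's. Rather than passing to the limit in the Poincar\'e--Wirtinger inequality for a fixed test function, the paper first proves a reflection-based extension theorem $H^1(\Omega,d\gamma_N)\to H^1(\mathbb R^N,d\gamma_N)$ with constant depending only on the interior-sphere radius, uses it to deduce the compact embedding $H^1(\Omega,d\gamma_N)\hookrightarrow L^2(\Omega,d\gamma_N)$, and then establishes the spectral convergence $\mu_1(\Omega_k)\to\mu_1(\Omega)$ by showing that the associated resolvent operators $A_k$ converge strongly to $A$. Your route bypasses all of this machinery: once the bounded convex case is available, restricting a fixed $u$ to an exhaustion and letting means and integrals converge by dominated and monotone convergence yields the inequality on $\Omega$ immediately. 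The ridge on $\Omega\cap B_R$ that you flag is a genuine nuisance but, as you say, is removed by an inner smooth-convex approximation and the very same limiting argument applied once more. What the paper's longer route buys is of independent value: the extension theorem itself, and---via the compact embedding---the fact that the infimum in the variational characterization is actually a minimum, so that $\mu_1(\Omega)$ is a bona fide eigenvalue with an eigenfunction; your argument invokes this when you appeal to the ``min'' formulation but does not itself supply it. The equality case for strips is handled the same way in both proofs.
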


Our strategy consists into constructing a suitable sequence $\{\Omega _{k}\}_{k\in \N}$
of bounded convex domains invading $\Omega $ and then passing to the limit
in {\eqref{an1}}. To show that $\mu _{1}(\Omega _{k})$ converge to $\mu
_{1}(\Omega )$ one of the main ingredients is an extension theorem for
functions belonging to $H^1(\Omega_k,d\gamma_N)$ with a constant
independent of $k$ (see Theorem \ref{t-ext}, see also \cite{C}).

The structure of the differential operator in \eqref{problem} suggests the relevance of the case of unbounded sets since the density of Gaussian measure degenerates at
infinity. Moreover, in such a case, the space $H^1(\Omega,d\gamma_N)$ does no longer coincide with $H^1(\Omega)$. Indeed, when $\Omega =\mathbb{R}^{N}$, the case mostly studied by physicists, the eigenvalues of problem \eqref{problem} are the
integers and the corresponding eigenfunctions are combinations of Hermite
polynomials which clearly does not belong to $H^1(\R^N)$. 

The Hermite operator appearing in problem \eqref{problem} is widely studied in literature from many points of view. It is a classical subject in quantum mechanics (see for instance \cite{CH}) as well as in probability; indeed it is  the generator of the Ornstein-Uhlenbeck semigroup (see for example \cite{Bo}). Finally, problems of the kind \eqref{problem} are related to some functional inequalities as the well-known GrossÕs Theorem on the Sobolev Logarithmic embedding (see e. g. \cite{G, E, E2, PT, C.die, BCT2, CP}).

Note that the convexity assumption in Theorem \ref{main} cannot be relaxed;
it is enough to consider the classical example of a planar domain made by
two equal squares connected by a thin corridor. Problems linking the
geometry of a domain and the sequence of eigenvalues of a second order elliptic operator  are classical since
the estimates by Faber, Krahn or P\'olya, Sz\'eg\"o concerning the first eigenvalue of the Laplacian with Dirichlet or Neumann boundary conditions respectively. Further developments of this topic can be found for instance in \cite{AB, AB2, AFT, BNT, Ch, BNT1, BFNT}, where estimates for Dirichlet eigenvalues and eigenfunctions of linear and nonlinear operator are derived. Concerning Neumann boundary conditions we refer the reader to \cite{AB2, BCT1} and to \cite{PW, AH, BCM, ENT, FNT} for lower bounds of Neumann eigenvalues in different contexts (Laplacian, $p$-Laplacian, manifolds of constant curvature). For results in Gauss space we mention, for instance, \cite{BCF, CdB, BCT}. Clearly the above list of references is far from being exhaustive; more papers in this growing field of research are cited in \cite{K,Ke,HP,H}.


\section{Proof of Theorem \protect\ref{main}}

We recall that, given a subset $\Omega $ of $\mathbb{R}^{N}$,  $\partial \Omega $ satisfies a uniform interior sphere condition
if
\begin{equation}
{\exists\bar{r}>0}:\quad\ \forall x\in \partial \Omega \text{ \ \ \ }\exists B_{\bar{r}}\subset \Omega
\text{ \ such that \ }\overline{B_{\bar{r}}}\cap \overline{\Omega }=\left\{
x\right\} ,  \label{uisc}
\end{equation}%
where $B_{\bar{r}}$ denotes a ball with radius $\bar{r}>0.$

The proof of our result is divided in two steps. The first one provides an
extension theorem that may have an interest by its own. In the second one we
consider a sequence of convex, bounded domains $\left\{ \Omega _{k}\right\} $
invading $\Omega $ satisfying $\mu _{1}(\Omega _{k})\geq 1$ and we show that
$\lim\limits_{k}\mu _{1}(\Omega _{k})=\mu _{1}(\Omega )$. 

\begin{theorem}
\label{t-ext} Let $\Omega \subset \mathbb{R}^{N}$ be a convex, $C^{2}$
domain whose boundary satisfies \eqref{uisc} and let us denote $d_{0}=%
\mathrm{dist}(0,\partial \Omega )$. Let $u\in H^{1}(\Omega ,d\gamma _{N})$;
there exist a function $\tilde{u}\in H^{1}(\mathbb{R}^{N},d\gamma _{N})$
extending $u$ to the whole $\mathbb{R}^{N}$ and a constant $C$ such that%
\begin{equation}
||\tilde{u}||_{H^{1}(\mathbb{R}^{N},d\gamma _{N})}\leq C||u||_{H^{1}(\Omega
,d\gamma _{N})}.  \label{ext-norm}
\end{equation}%
In \eqref{ext-norm} $C=C(\bar{r},N)$ if $0\in \Omega $, while $C=C(\bar{r}%
,N,d_{0})$ if $0\notin \Omega $.
\end{theorem}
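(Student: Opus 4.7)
The natural approach is a reflection extension across $\partial \Omega$ combined with a smooth cutoff that kills the extension far from $\Omega$. The uniform interior sphere condition \eqref{uisc} will supply the geometric control needed to bound the Jacobian of the reflection, while convexity of $\Omega$ will supply the Gaussian-weight comparison specific to this weighted setting.

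Concretely, fix $\delta \in (0, \bar r)$ and let $U_{\delta} = \{x \in \mathbb{R}^{N} \setminus \overline{\Omega} : \mathrm{dist}(x, \partial \Omega) < \delta\}$. Since $\Omega$ is convex and $C^{2}$, the nearest-point projection $p : \mathbb{R}^{N} \setminus \Omega \to \partial \Omega$ is single-valued and of class $C^{1}$ on $U_{\delta}$. Define the reflection $\sigma(x) = 2 p(x) - x$. Writing $x = p(x) + d(x)\, \nu(p(x))$ with $d(x) = \mathrm{dist}(x, \partial \Omega)$ and $\nu$ the outer unit normal, one has $\sigma(x) = p(x) - d(x)\, \nu(p(x))$; since the interior ball guaranteed by \eqref{uisc} is centered at $c = p(x) - \bar r\, \nu(p(x))$ and $|\sigma(x) - c| = \bar r - d(x) \le \bar r$, the map $\sigma$ sends $U_{\delta}$ into $\Omega$. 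Fix a smooth cutoff $\phi$ depending only on $\mathrm{dist}(\cdot, \partial \Omega)$, with $\phi \equiv 1$ on $\overline{\Omega} \cup \{\mathrm{dist}(\cdot, \partial \Omega) \le \delta/2\}$, $\phi \equiv 0$ outside $\Omega \cup U_{\delta}$, and $|D\phi| \le C/\delta$, and set
\[
\tilde{u}(x) = \begin{cases} u(x), & x \in \Omega, \\ \phi(x)\, u(\sigma(x)), & x \in U_{\delta}, \\ 0, & \text{otherwise}. \end{cases}
\]

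The key observation for the Gaussian weight is the convex-geometric bound $p \cdot \nu(p) \ge -d_{0}$ valid for every $p \in \partial \Omega$: for any $y_{0} \in \overline{\Omega}$, convexity gives $(y_{0} - p) \cdot \nu(p) \le 0$, so $p \cdot \nu(p) \ge y_{0} \cdot \nu(p) \ge -|y_{0}|$, and taking the infimum over $y_{0}$ yields the claim (the bound is $0$ when $0 \in \Omega$). A direct expansion gives $|\sigma(x)|^{2} - |x|^{2} = -4 d(x)\, p(x) \cdot \nu(p(x)) \le 4 \delta d_{0}$, hence
\[
e^{-|x|^{2}/2} \le e^{2 \delta d_{0}}\, e^{-|\sigma(x)|^{2}/2} \qquad \text{on } U_{\delta},
\]
with the factor equal to $1$ when $0 \in \Omega$. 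Combined with the change of variables $y = \sigma(x)$, whose Jacobian satisfies $c(\bar r, N) \le |\det D\sigma(x)| \le C(\bar r, N)$, this controls $\int_{U_{\delta}} |u \circ \sigma|^{2}\, d\gamma_{N}$ and $\int_{U_{\delta}} |Du \circ \sigma|^{2}\, d\gamma_{N}$ by the corresponding integrals over $\Omega$.

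To conclude, one computes $D\tilde{u} = \phi\, D\sigma^{T} (Du) \circ \sigma + u(\sigma)\, D\phi$ on $U_{\delta}$ and applies the previous bounds together with $|D\phi| \le C/\delta$ to deduce \eqref{ext-norm}, with $C$ depending only on $\bar r$ and $N$ when $0 \in \Omega$ and additionally on $d_{0}$ otherwise. Continuity of $\tilde{u}$ across $\partial \Omega$, and hence its global $H^{1}$-regularity, is immediate because $\phi \equiv 1$ and $\sigma = \mathrm{id}$ on $\partial \Omega$. The step I expect to be technically most delicate is the uniform bound on $|D\sigma|$: via $Dp = I - \nabla d \otimes \nabla d - d\, \nabla^{2} d$ it reduces to $C^{1,1}$ control of the signed distance function in a $\bar r$-tubular neighborhood of $\partial \Omega$, which \eqref{uisc} supplies by bounding the principal curvatures of $\partial \Omega$ by $1/\bar r$. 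Without \eqref{uisc} this control could fail and the Jacobian of $\sigma$ could degenerate; without convexity, the weight estimate $p \cdot \nu(p) \ge -d_{0}$ could fail outright, so both hypotheses enter crucially.
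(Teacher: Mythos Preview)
Your argument is correct and follows the same architecture as the paper's proof: extend $u$ by reflection across $\partial\Omega$ into a tubular neighborhood, multiply by a cutoff, bound the Jacobian of the reflection via the principal-curvature estimate $\kappa_i\le 1/\bar r$ supplied by \eqref{uisc}, and use convexity to compare the Gaussian weights at a point and its reflected image. The paper writes the reflection as $\Phi(x)=x-2d(x)Dd(x)$ from the inner collar to the outer collar (the inverse of your $\sigma$) and computes $|J_\Phi|=\prod_i\bigl(1+\tfrac{2d\kappa_i}{1-d\kappa_i}\bigr)\in[1,3^{N-1}]$ after choosing $\tilde r=\bar r/2$, which is exactly the bound you invoke for $\det D\sigma$.

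The one genuine difference is the treatment of the case $0\notin\Omega$. The paper reduces it to the case $0\in\Omega$ by a rigid translation $T$ combined with the multiplicative twist $v(x)=u(T^{-1}x)\exp(-x_1\delta/2-\delta^2/4)$, which preserves the Gaussian $L^2$-norm. You instead handle both cases at once via the inequality $p\cdot\nu(p)\ge -d_0$ (a direct consequence of convexity), obtaining $e^{-|x|^2/2}\le e^{2\delta d_0}e^{-|\sigma(x)|^2/2}$ on $U_\delta$; this is slightly more streamlined, and makes the dependence of the constant on $d_0$ fully explicit. A small wording slip: where you write ``taking the infimum over $y_0$'' you mean taking the infimum of $|y_0|$ (equivalently, the supremum of $-|y_0|$); the conclusion is unaffected. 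You should also fix $\delta$ as a definite fraction of $\bar r$ (e.g.\ $\delta=\bar r/2$) so that the Jacobian bounds and $|D\phi|\le C/\delta$ become constants depending only on $\bar r$ and $N$.
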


\begin{proof}[Proof of Theorem \protect\ref{t-ext}]
We distinguish two cases: $0 \in \Omega$ and $0 \notin \Omega$ {and we fix ${\tilde r}=\frac{\bar{r}}{2}$}.

Suppose first that $0 \in \Omega$. Let us denote by $d(x) = \mathrm{dist}%
(x,\partial\Omega)$ the distance of a point $x\in \mathbb{R}^N$ from $%
\partial\Omega$ and
\begin{equation*}
\Omega^{\tilde r}=\{x \in \mathbb{R}^N\setminus \overline \Omega :
d(x)<\tilde r\}, \quad \Omega_{\tilde r}=\{x \in \Omega : d(x)<\tilde r\}.
\end{equation*}
\noindent Let $u\in H^{1}(\Omega ,d\gamma _{N})$; we want to extend $u$ to $%
\mathbb{R}^{N}$ by reflection along the normal to $\partial \Omega $. Define
\begin{equation*}
\Phi :x\in \Omega _{\tilde{r}}\longrightarrow \Phi (x)=x-2d(x)Dd(x)\in
\Omega ^{\tilde{r}}.
\end{equation*}
By construction $\Phi $ is a $C^{1}$ one-to-one map; we claim that
\begin{equation}
1\leq |J_{\Phi }(x)|\leq 3^{N-1},\text{ \ \ }\forall x\in \Omega _{\tilde{r}%
}.  \label{jacobian}
\end{equation}
A straightforward computation yields
\begin{equation*}
\frac{\partial \Phi _{i}(x)}{\partial x_{j}}=\delta _{ij}-2\frac{\partial
d(x)}{\partial x_{j}}\frac{\partial d(x)}{\partial x_{i}}-2d(x) \frac{%
\partial ^{2}d(x)}{\partial x_{i}\partial x_{j}}.
\end{equation*}
By a rotation of coordinates we can assume that the $x_{N}$-axis lies in the
direction $Dd(x).$ By a further rotation of the first $N-1$ coordinates we
can also assume that the $x_{1},...,x_{N-1}$ axes lie along the principal
directions corresponding to the principal curvatures $\kappa _{1},...,\kappa
_{N-1}$ of $\partial \Omega $ at $p(x)=\frac{x+\Phi (x)}{2}.$ Clearly $p(x)$
is the projection of $x$ on $\partial \Omega $. In this coordinate system,
known as principal coordinate system at $p(x)$, it is immediate to verify
that
\begin{equation*}
\left\vert J_{\Phi }(x)\right\vert =\prod\limits_{i=1}^{N-1}\left( 1+ \frac{%
2d(x)\kappa _{i}}{1-d(x)\kappa _{i}}\right) .
\end{equation*}
Claim \eqref{jacobian} follows recalling that $d(x)< \tilde{r} =%
\frac{\bar{r}}{2}$ and $\kappa _{i}\leq \frac{1}{\overline{r}}.$

\noindent We observe that in the simplest case $N=2,$ \eqref{jacobian} has
been proven, in a different and more direct way, in \cite{BCT}.

\noindent Now define
\begin{equation*}
\overline{u}(x)=u(\Phi ^{-1}(x))\quad \forall \>x\in \Omega ^{\tilde{r}}.
\end{equation*}%
Let $\theta \in C_{0}^{\infty }(\mathbb{R}^{N})$ be a cut-off function such
that $0\leq \theta \leq 1$ in $\mathbb{R}^{N}$, $\theta =1$ on $\Omega $, $%
\theta =0$ in $\mathbb{R}^{N}\setminus (\Omega \cup\Omega ^{\tilde{r}})$ and
$|D\theta |\leq C=C(\bar{r})$. Set
\begin{equation}
\tilde{u}=\left\{
\begin{array}{ll}
u & \text{in } \>\> \Omega \\
\theta \overline{u} & \text{in} \>\> \Omega ^{\tilde{r}}\  \\
0 & \text{in} \> \>\mathbb{R}^{N}\setminus (\Omega \cup\Omega ^{\tilde{r}}).%
\end{array}%
\right.  \label{cris}
\end{equation}%
{The mediatrix of $x$ and $\Phi(x)$ is a supporting plane for $\Omega $ which is convex. Since $\Omega$} 
contains the origin, it is easy to verify that
\begin{equation}
\exp \left( \frac{-|\Phi (x)|^{2}}{2}+\frac{|x|^{2}}{2}\right) \leq 1\quad
\forall \>x\in \Omega _{\tilde{r}}.  \label{exp}
\end{equation}%
Thus, by \eqref{jacobian}, \eqref{cris} and \eqref{exp} we get
\begin{eqnarray}
\displaystyle\int_{\mathbb{R}^{N}}\tilde{u}^{2}d\gamma _{N} &=&\int_{\Omega
}u^{2}d\gamma _{N}+\int_{\Omega ^{\tilde{r}} }\tilde{u}^{2}d\gamma _{N}
 \label{u2} \\
&\leq &\int_{\Omega }u^{2}d\gamma _{N}+\int_{ \Omega _{\tilde{r}%
}}u^{2}(x)\exp \left( -\frac{|\Phi (x)|^{2}}{2}+\frac{|x|^{2}}{2}\right)
|J_{\Phi }|d\gamma _{N}  \notag \\
&\leq &C(N)\int_{\Omega }u^{2}d\gamma _{N}.  \notag
\end{eqnarray} 
On the other hand, \eqref{jacobian}, \eqref{cris}, \eqref{exp} and \eqref{u2}
imply
\begin{eqnarray*}
&\displaystyle\int_{\mathbb{R}^{N}}&|D\tilde{u}|^{2}d\gamma _{N}\leq C(N,%
\bar{r})\left[ \int_{\Omega ^{\tilde{r}}}\bar{u}^{2}d\gamma
_{N}+\int_{\Omega\cup \Omega ^{\tilde{r}}}|D\bar{u}|^{2}d\gamma _{N}\right]
\\
&\leq &C(N,\bar{r})\left[ \int_{\Omega }u^{2}d\gamma _{N}+\int_{\Omega
}|Du|^{2}d\gamma _{N}+\int_{ \Omega _{\tilde{r}}}|Du|^{2}\exp \left( -\frac{%
|\Phi (x)|^{2}}{2}+\frac{|x|^{2}}{2}\right) |J_{\Phi }|d\gamma _{N}\right]
\\
&\leq &C(N,\bar{r})\left[ \int_{\Omega }u^{2}d\gamma _{N}+\int_{\Omega
}|Du|^{2}d\gamma _{N}\right] .
\end{eqnarray*}%
Hence if $\Omega $ contains the origin (\ref{ext-norm}) holds true.

Suppose now that $0\notin \Omega $. Up to a rotation about the origin, the
translation $T:x=(x_{1},x_{2},...,x_{N})\in \mathbb{R}^{N}\rightarrow
(x_{1}-\delta ,x_{2},...,x_{N})\in \mathbb{R}^{N}$, for a fixed $\delta >d_0$,
maps $\Omega $ onto a set $T(\Omega )$ containing the origin. Define
\begin{equation*}
v(x)=v(x_{1},x_{2},...,x_{N})=u(x_{1}+\delta ,x_{2},...,x_{N})\exp \left( -%
\frac{x_{1}\delta }{2}-\frac{\delta ^{2}}{4}\right) ,\quad x\in T(\Omega );
\end{equation*}%
then
\begin{equation*}
\int_{\Omega }u^{2}d\gamma _{N}=\int_{T(\Omega )}v^{2}d\gamma _{N}.
\end{equation*}%
Since by construction $T(\Omega )$ contains the origin, there exists a
function $\tilde{v}\in H^{1}(\mathbb{R}^{N},d\gamma _{N})$ such that $\tilde{%
v}\left\vert _{T(\Omega )}\right. =v$ and
\begin{equation*}
||\tilde{v}||_{H^{1}(\mathbb{R}^{N},d\gamma _{N})}\leq C(\bar{r}%
,N)||v||_{H^{1}(T(\Omega ),d\gamma _{N})}.
\end{equation*}%
Let
\begin{equation*}
\tilde{u}(x)=\tilde{u}(x_{1},x_{2},...,x_{N})=\tilde{v}(x_{1}-\delta
,x_{2},...,x_{N})\exp \left( \frac{x_{1}\delta }{2}-\frac{\delta ^{2}}{4}%
\right) ,
\end{equation*}%
we finally get
\begin{equation*}
||\tilde{u}||_{H^{1}(\mathbb{R}^{N},d\gamma _{N})}\leq C(\bar{r}%
,N,d_{0})||u||_{H^{1}(\Omega ,d\gamma _{N})}.
\end{equation*}
\end{proof}

Using the fact that $H^{1}(\mathbb{R}^{N},d\gamma _{N})$ is compactly
embedded into $L^{2}(\mathbb{R}^{N},d\gamma _{N})$ (see for example \cite{D}) and the above extension
theorem we deduce the compact embedding of $H^{1}(\Omega ,d\gamma _{N})$
into $L^{2}(\Omega ,d\gamma _{N})$ (see also \cite{FP}). Therefore, as said in Section 1, by the
classical spectral theory on compact self-adjoint operators, $\mu
_{1}(\Omega )$ satisfies the following variational characterization
\begin{equation*}
\mu _{1}(\Omega )=\min \left\{ \frac{\dint_{\Omega }|D\psi |^{2}d\gamma _{N}}{%
\dint_{\Omega }|\psi |^{2}d\gamma _{N}}:\psi \in H^{1}(\Omega ,d\gamma
_{N})\backslash \left\{ 0\right\} ,\> \int_{\Omega }\psi d\gamma
_{N}=0\right\}.
\end{equation*}
When $\Omega $ is bounded, estimate \eqref{mu} is contained in \cite%
{AN}. Therefore, from now on $\Omega $ will denote an unbounded domain. Let $%
\Omega _{k}$ be a sequence of convex, bounded,  $C^{2}$ domains whose
boundaries satisfy \eqref{uisc} for every $k\in \mathbb{N}$, and invading $%
\Omega $ in the sense that
\begin{equation*}
\Omega _{k}\subset \Omega _{k+1}\quad \forall k\in \mathbb{N}\quad \mbox{and}%
\quad \bigcup_{k\in \mathbb{N}}\Omega _{k}=\Omega .
\end{equation*}
For the explicit construction of a sequence of this kind see for instance
\cite{BCT}.

\noindent As proven in \cite{AN} we have that
\begin{equation}\label{an}
\mu _{1}(\Omega _{k})\geq 1,\quad \forall k\in \mathbb{N},  
\end{equation} 
that can be equivalently written as
\begin{equation}\label{bic}
\int_{\Omega_k}\psi^2d\gamma_N \le \int_{\Omega_k}|D\psi|^2d\gamma_N, \qquad \forall \psi \in H^1(\Omega_k,d\gamma_N): \int_{\Omega_k}\psi d\gamma_N=0.
\end{equation}
Now we want to pass to the limit in \eqref{an}. To this aim consider the
operator
\begin{equation*}
A_{k}:f\in L^{2}(\Omega ,d\gamma _{N}):\int_{\Omega }fd\gamma
_{N}=0\longrightarrow \tilde{u}_{k}\in H^{1}(\Omega ,d\gamma _{N}),
\end{equation*}%
where $\tilde{u}_{k}$ is the extension provided in Theorem \ref{t-ext} of
the solution $u_{k}\in H^1(\Omega_k,d\gamma_N)$ to the following problem
\begin{equation}
\left\{
\begin{array}{ll}
-\mathrm{div}\left( \exp \left( -\frac{|x|^{2}}{2}\right) Du_{k}\right)
=(f-c_{k})\exp \left( -\frac{|x|^{2}}{2}\right)  & \mbox{in}\>\Omega _{k} \\
&  \\
\dfrac{\partial u_{k}}{\partial \nu }=0 & \mbox{on}\>\partial \Omega _{k}\\
&\\
\dint_{\Omega_k}u_kd\gamma_N=0,%
\end{array}%
\right.   \label{prob_k}
\end{equation}%
where $c_{k}=:\int_{\Omega _{k}}fd\gamma _{N}$ and $\nu$ is the outward normal to $\partial\Omega_k$. Observe that Lax-Milgram theorem ensures the existence and uniqueness of $u_k$. Moreover $\gamma _{N}(\Omega
\setminus \Omega _{k})\rightarrow 0$ implies $c_{k}\rightarrow 0$. We also introduce the operator
\begin{equation*}
A:f\in L^{2}(\Omega ,d\gamma _{N}):\int_{\Omega }fd\gamma
_{N}=0\longrightarrow u\in H^{1}(\Omega ,d\gamma _{N}),
\end{equation*}%
where $u$ is the unique solution, whose existence is guaranteed by Lax-Milgram theorem, to the problem
\begin{equation}
\left\{
\begin{array}{ll}
-\mathrm{div}\left( \exp \left( -\frac{|x|^{2}}{2}\right) Du\right) =f\exp
\left( -\frac{|x|^{2}}{2}\right)  & \mbox{in}\>\Omega  \\
&  \\
\dfrac{\partial u}{\partial \nu }=0 & \mbox{on}\>\partial \Omega \\
&\\
\dint_{\Omega }u d\gamma_N=0.%
\end{array}%
\right.   \label{bi}
\end{equation}%
Using $u_{k}$ as test function in \eqref{prob_k}, from Schwarz inequality we deduce
\begin{equation*}
\int_{\Omega _{k}}|Du_{k}|^{2}d\gamma _{N}=\int_{\Omega
_{k}}u_{k}(f-c_{k})d\gamma _{N}\leq \left( \int_{\Omega
_{k}} u_{k}^{2}d\gamma _{n}\right) ^{1/2}\left( \int_{\Omega
_{k}}(f-c_{k})^{2}d\gamma _{N}\right) ^{1/2}.   
\end{equation*}%
Using \eqref{bic} and recalling that $c_k \to 0$, we get 
$$
\int_{\Omega_k} |Du_k|^2d\gamma_N \le C_1 \left(\int_\Omega f^2d\gamma_N\right)^{1/2}+C_2,
$$
where $C_1$, $C_2$ are positive constants whose values are independent of $k$. The above inequality together with \eqref{bic} yield 
$$
\int_{\Omega_k}u_k^2d\gamma_N+\int_{\Omega_k}|Du_k|^2 d\gamma_N \le C,
$$
where $C$ is a positive constant whose value is independent of $k$. From \eqref{ext-norm} we deduce
that  the sequence $ 
\left\{ \tilde{u}_{k}\right\}_{k \in \N}$ is bounded in $H^{1}(\Omega ,d\gamma _{N}).$
Since the embedding of $H^{1}(\Omega ,d\gamma _{N})$ into $ 
L^{2}(\Omega ,d\gamma _{N})$ is compact, there exists a
(not relabeled) subsequence $\tilde{u}_{k}$ such that $\tilde{u}%
_{k}\rightharpoonup v$ in $H^{1}(\Omega ,d\gamma _{N})$, $\tilde{u}%
_{k}\rightarrow v$ in $L^{2}(\Omega ,d\gamma _{N})$ and a.e. in $\Omega $.
In fact $v$ coincides with $u$ since they both solve the same problem \eqref{bi}.
Indeed, let $\phi \in C^{\infty }(\Omega )$.  Recalling that $\gamma
_{N}(\Omega \setminus \Omega _{k})\to 0$ and $\Omega_k \subset \Omega$, we get
\begin{eqnarray*}
\int_{\Omega }DvD\phi d\gamma _{N} &=&\lim_{k}\int_{\Omega }D\tilde{u}%
_{k}D\phi d\gamma _{N}=\lim_{k}\left( \int_{\Omega _{k}}Du_{k}D\phi d\gamma
_{N}+\int_{\Omega \setminus \Omega _{k}}D\tilde{u}_{k}D\phi d\gamma
_{N}\right)  \\
&=&\lim_{k}\int_{\Omega _{k}}(f-c_{k})\phi d\gamma _{N}=\int_{\Omega }f\phi
d\gamma _{N}.
\end{eqnarray*}%
Finally, as $k$ goes to $+\infty $,
\begin{equation}
||(A_{k}-A)f||_{L^{2}(\Omega ,d\gamma _{N})}=||\tilde{u}_{k}-u||_{L^{2}(%
\Omega ,d\gamma _{N})}\rightarrow 0.  \label{Ak_A}
\end{equation}%
The compact embedding of $H^1(\Omega,d\gamma_N)$ into $L^2(\Omega,d\gamma_N)$ and (\ref{Ak_A}) allow us to adapt
Theorems 2.3.1 and 2.3.2 in \cite{H} to conclude that the operators $A_k$ strongly converge to $A$ and hence
\begin{equation*}
\mu _{1}(\Omega _{k})\rightarrow \mu _{1}(\Omega ).
\end{equation*}%

Finally we prove the optimality of our estimate \eqref{mu}. Consider the $N$-dimensional strip
$$
S_a=\{x=(x_1,x_2,...,x_N)\in \R^N: \> -a<x_1<a,\> x_2,...,x_n\in\R\},\quad a \in (0,+\infty).
$$
The eigenfunctions are factorized and can be written as linear combinations of products of homogeneous Hermite polynomials $H_{n_1}(x_1)$, $H_{n_2}(x_2)$,...,$H_{n_N}(x_N)$. We recall that the Hermite polynomials in one variable are defined by
$$
H_n(t)=(-1)^ne^{t^2/2}\frac{d^n}{dt^n}e^{-t^2/2},\quad n \in \N \cup \{0\},
$$
and they constitute a complete set of eigenfunctions to problem \eqref{problem} when $\Omega=\R$; more precisely
$$
-\left(e^{-t^2/2}H_n'(t)\right)'=ne^{-t^2/2}H_n(t),\quad n \in \N \cup \{0\}.
$$
Denote by $\lambda_1(-a,a)$ the first Dirichlet eigenvalue of the one-dimensional Hermite operator in the interval $(-a,a)$. One can easily verify that
$$
\mu_1(-a,a)=\lambda_1(-a,a)+1>1=\mu_1(\R).
$$
Therefore  $\mu_1(S_a)=1$ for every $a \in (0,+\infty)$ and a corresponding eigenfunction is, for instance, 
{$H_1(x_2)=x_2$}.

\end{document}